\newcommand{\losemi}{{\otimes \kern -.78em \ltimes}}
\newcommand{\rosemi}{{\otimes \kern -.78em \rtimes}}
\newcommand{\Hom}{\ensuremath{\operatorname{Hom}}}
\newcommand{\Ext}{\operatorname{Ext}}
\newcommand{\la}{\lambda}
\newcommand{\St}{\operatorname{St}}
\newcommand{\soc}{\operatorname{soc}}
\newcommand{\rad}{\operatorname{rad}}
\newcommand{\leqnomode}{\tagsleft@true}
\newcommand{\reqnomode}{\tagsleft@false}
\newtheorem{theorem}{Theorem}[subsection]
\let\c@fact\c@theorem\makeatother
\let\c@note\c@theorem\makeatother
\let\c@lemma\c@theorem\makeatother
\let\c@lemma\c@theorem\makeatother
\newtheorem{quest}{Question}[subsection]
\let\c@ques\c@theorem\makeatother
\let\c@ques\c@conjecture\makeatother
\newtheorem{prop}{Proposition}[subsection]
\let\c@prop\c@theorem\makeatother
\newtheorem{conjecture}{Conjecture}[subsection]
\let\c@conjecture\c@theorem\makeatother
\let\c@quest\c@conjecture\makeatother
\let\c@cor\c@theorem\makeatother
\let\c@defn\c@theorem\makeatother
\theoremstyle{definition}
\newtheorem{remark}{Remark}[subsection]
\let\c@remark\c@theorem\makeatother
\let\c@example\c@theorem\makeatother
\numberwithin{equation}{subsection}
\crefname{theorem}{Theorem}{Theorems}
\crefname{fact}{Fact}{Facts}
\crefname{note}{Note}{Notes}
\crefname{lemma}{Lemma}{Lemmas}
\crefname{alg}{Algorithm}{Algorithms}
\crefname{remark}{Remark}{Remarks}
\crefname{example}{Example}{Examples}
\crefname{prop}{Proposition}{Propositions}
\crefname{conj}{Conjecture}{Conjectures}
\crefname{cor}{Corollary}{Corollaries}
\crefname{defn}{Definition}{Definitions}
\crefname{quest}{Question}{Questions}
\crefname{equation}{\!\!}{\!\!} 
\newcounter{listequation}
\begin{document}

\title{Counterexamples to the Tilting and $(p,r)$-Filtration Conjectures}

\begin{abstract} 
In this paper the authors produce a projective indecomposable module for the Frobenius kernel of a simple algebraic group in characteristic $p$ that is not the restriction of an indecomposable tilting module.  This yields a counterexample to Donkin's longstanding Tilting Module Conjecture.  The authors also produce a Weyl module that does not admit a $p$-Weyl filtration.  This answers an old question of Jantzen, and also provides a counterexample to the $(p,r)$-Filtration Conjecture.
\end{abstract}

\author{\sc Christopher P. Bendel}
\address
{Department of Mathematics, Statistics and Computer Science\\
University of
Wisconsin-Stout \\
Menomonie\\ WI~54751, USA}
\thanks{Research of the first author was supported in part by Simons Foundation Collaboration Grant 317062}
\email{bendelc@uwstout.edu}

\author{\sc Daniel K. Nakano}
\address
{Department of Mathematics\\ University of Georgia \\
Athens\\ GA~30602, USA}
\thanks{Research of the second author was supported in part by
NSF grant DMS-1701768}
\email{nakano@math.uga.edu}

\author{\sc Cornelius Pillen}
\address{Department of Mathematics and Statistics \\ University
of South
Alabama\\
Mobile\\ AL~36688, USA}
\thanks{Research of the third author was supported in part by Simons Foundation Collaboration Grant 245236}
\email{pillen@southalabama.edu}

\author{Paul Sobaje}
\address{Department of Mathematical Sciences \\
          Georgia Southern University\\
          Statesboro, GA~30458, USA}
\email{psobaje@georgiasouthern.edu}
\date{\today}
\subjclass[2010]{Primary 20G05, 20J06; Secondary 18G05}
\date\today

\maketitle

\section{Introduction}

\subsection{} Let $G$ be a semisimple, simply connected algebraic group over an algebraically closed field of characteristic $p>0$ and ${\mathfrak g}$ 
be its Lie algebra. Restricted representations for the Lie algebra ${\mathfrak g}$ are equivalent to representations for the first Frobenius 
kernel $G_{1}$. In the 1960s Curtis showed that the simple $G_{1}$-modules lift to simple modules for $G$. Later, 
Humphreys and Verma investigated the projective indecomposable modules for $G_{1}$ and asked whether these modules 
have a compatible $G$-structure. This statement was verified for $p\geq 2h-2$ (where $h$ is the Coxeter number) by work of Ballard \cite{B} and Jantzen \cite{J}. For over 50 years, it has been anticipated that the Humphreys-Verma Conjecture would hold for all $p$. 

In 1990, Donkin presented a series of conjectures at MSRI. One of the conjectures, known as the Tilting Module Conjecture, states that 
a projective indecomposable module for $G_r$ can be realized as an indecomposable tilting $G$-module (see Conjecture~\ref{tilting}). 
Like the Humphreys-Verma Conjecture, the Tilting Module Conjecture holds for $p\geq 2h-2$ with the hope of 
being valid for all $p$. Recently, the Tilting Module Conjecture has been shown to be related to another one of Donkin's conjectures 
involving good $(p,r)$-filtrations. A more detailed exposition with the connections is presented in Section~\ref{S:conjectures}. 

The Tilting Module Conjecture has taken on additional importance following work by Achar, Makisumi, Riche, and Williamson \cite{AMRW}, who have shown that when $p > h$, the characters of indecomposable tilting modules can be given via $p$-Kazhdan-Lusztig polynomials, confirming a conjecture by Riche and Williamson \cite{RW}.  When $p \ge 2h-2$, the Tilting Module Conjecture then allows one to deduce the characters of simple $G$-modules. The authors of \cite{AMRW} credit Andersen with this observation.

\subsection{} The goal of this paper is to present counterexamples to the conjectures and questions stated in Section~\ref{S:conjectures}. 
In this subsection, let $G$ be a simple algebraic group whose root system is of type $G_2$ and $p=2$.
In particular, we 
\begin{itemize} 
\item[(1.2.1)] present a counterexample to the Tilting Module Conjecture - see Theorem \ref{tilt:no}; 
\item[(1.2.2)] construct a counterexample to one direction of Donkin's Good $(p,r)$-Filtration Conjecture (i.e., Conjecture~\ref{donkinconj}($\Leftarrow$)) - see Theorem \ref{T:no2good} and Section \ref{S:moduleM}; 
\item[(1.2.3)] give an example of a costandard/induced module $\nabla(\la)$ that does not admit a good $(p,r)$-filtration - see Theorem \ref{T:no2good}. 
\end{itemize}  

Specifically, we demonstrate that there does not exist a good $2$-filtration for the induced module $\nabla(2,1)$.\footnote{A major step in this process was a computation of a filtration of $\Delta(2,1)$, obtained using Stephen Doty's WeylModule package for the software GAP \cite{Doty,GAP}, that, when dualized, indicated that $\nabla(2,1)$ could not have a good $2$-filtration.} This gives a negative answer to an open question of Jantzen \cite{J}, and this module is also is a counterexample for (1.2.2). As a consequence of these results, we prove that the indecomposable tilting module $T(2,2)$ is decomposable over the first Frobenius kernel of $G$. We present a formal proof of this fact using information about extensions of simple $G$-modules of small highest weights. 
\footnote{This fact was verified in another way by running Doty's GAP program to compute that the socle of $\Delta(2,2)$ is isomorphic to $k \oplus L(0,1)$.  As $\Delta(2,2)$ is a submodule of $T(2,2)$, one concludes that the socle of $T(2,2)$ has at least two factors over $G_1$, so that $T(2,2)$ splits into at least two projective summands over $G_1$.}  


\subsection{Acknowledgements} The authors would like to thank Henning H. Andersen and Jens C. Jantzen for useful comments and suggestions on an earlier version of this manuscript.

\section{Preliminaries}

\subsection{Notation.} The notation will follow the conventions in \cite[Section 2.1]{BNPS}, most of which follow those in \cite{rags} (though our notation for induced and Weyl modules follows the costandard and standard module conventions in highest weight category literature). Let $G$ be a connected, 
semisimple algebraic group scheme defined over ${\mathbb F}_{p}$ and $G_{r}$ be its $r$th Frobenius kernel. 

Let $X_{+}$ denote the dominant weights for $G$, and $X_{r}$ be the $p^{r}$-restricted weights. For $\lambda\in X_{+}$, there are four fundamental classes of $G$-modules (each having highest weight $\lambda$): 
$L(\lambda)$ (simple), $\nabla(\lambda)$ (costandard/induced), $\Delta(\lambda)$ (standard/Weyl), and $T(\lambda)$ (indecomposable tilting). A $G$-module $M$ has a {\em good filtration} 
(resp. {\em Weyl filtration}) if and only if $M$ has a filtration with factors of the form $\nabla(\mu)$ (resp. $\Delta(\mu)$) for suitable $\mu\in X_+$.  

For $\lambda\in X_+$ with unique decomposition $\lambda = \lambda_0 + p^r\lambda_1$ with $\lambda_0\in X_r$ and $\lambda_1\in X_+$, define $\nabla^{(p,r)}(\lambda) = L(\lambda_0)\otimes \nabla(\lambda_1)^{(r)}$ where $(r)$ denotes the twisting of the module action by the $r$th Frobenius morphism. Similarly, 
set $\Delta^{(p,r)}(\lambda) = L(\lambda_0)\otimes \Delta(\lambda_1)^{(r)}$.  A $G$-module $M$ has a {\em good $(p,r)$-filtration} 
(resp. {\em Weyl $(p,r)$-filtration}) if and only if $M$ has a filtration with factors of the form $\nabla^{(p,r)}(\mu)$ (resp. $\Delta^{(p,r)}(\mu)$) for suitable $\mu\in X_+$. In the case when $r=1$, we often refer to 
good $(p,1)$-filtrations as good $p$-filtrations. 

Let $\rho$ be the sum of the fundamental weights and $\text{St}_r = L((p^r-1)\rho)$ (which is also isomorphic to $\nabla((p^r-1)\rho)$ and $\Delta((p^r-1)\rho)$) be the $r$th Steinberg module. For $\lambda\in X_{r}$, let $Q_{r}(\lambda)$ denote the projective cover (equivalently, injective hull) of $L(\lambda)$ as a $G_{r}$-module. If $\lambda\in X_{r}$, set $\hat{\lambda}=2(p^{r}-1)\rho+w_{0}\lambda$ 
where $w_{0}$ is the long element in the Weyl group $W$. 

Let $M$ be a finite-dimensional $G$-module, and let 
$$M\supseteq \text{rad}_{G} M \supseteq \text{rad}^{2}_{G} M \supseteq \dots \supseteq \{0\}$$ 
be the radical series of $M$. Moreover, let 
$$\{0\} \subseteq \text{soc}_{G} M \subseteq \text{soc}^{2}_{G} M \subseteq \dots \subseteq M$$ 
be the socle series for $M$. One can similarly define such filtrations for $G_{r}$-modules. 

\subsection{The Conjectures.} \label{S:conjectures}  In the early 1970s Humphreys and Verma presented the following conjecture on the lifting of $G$-structures on the projective modules for $G_{r}$. 

\begin{conjecture} \label{lifting} For $\lambda\in X_{r}$, the $G_{r}$-module structure on $Q_{r}(\lambda)$ can be lifted to $G$. 
\end{conjecture}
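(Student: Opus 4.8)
The plan is to realize $Q_r(\lambda)$ as the restriction of a carefully chosen $G$-module, exploiting the special status of the Steinberg module $\St_r = L((p^r-1)\rho)$: it is simultaneously a simple $G$-module and a projective-injective $G_r$-module (indeed $\St_r \cong Q_r((p^r-1)\rho)$). The crucial closure property is that for any finite-dimensional $G$-module $V$, the tensor product $\St_r \otimes V$ carries a $G$-structure whose restriction to $G_r$ is projective. Hence $(\St_r \otimes V)|_{G_r}$ decomposes into a direct sum of modules $Q_r(\mu)$, and the whole problem reduces to producing a $G$-module $V$ together with a $G$-direct summand $M$ of $\St_r \otimes V$ with $M|_{G_r} \cong Q_r(\lambda)$; any such $M$ is then the desired lift.

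Concretely, I would fix the candidate weight $\hat\lambda = 2(p^r-1)\rho + w_0\lambda$, which is dominant for $\lambda \in X_r$, and take $M = T(\hat\lambda)$, the indecomposable tilting module of that highest weight. Since $\St_r$ is tilting (it equals both $\nabla((p^r-1)\rho)$ and $\Delta((p^r-1)\rho)$), the module $\St_r \otimes \St_r$ is tilting and contains $T(\hat\lambda)$ as a $G$-summand, so in particular $T(\hat\lambda)|_{G_r}$ is projective. I would then show that $T(\hat\lambda)|_{G_r}$ has simple head $L(\lambda)$, so that $Q_r(\lambda)$ occurs as a $G_r$-summand, and finally argue by a character and linkage count that this summand exhausts $T(\hat\lambda)|_{G_r}$, yielding $T(\hat\lambda)|_{G_r} \cong Q_r(\lambda)$ and hence the lift.

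The main obstacle is precisely this last step: controlling the $G_r$-indecomposable structure of $T(\hat\lambda)$. For $p \geq 2h-2$ the weight $\hat\lambda$ lies in a region where the relevant weights stay confined and the character of $T(\hat\lambda)$ matches that of a single $Q_r(\lambda)$, forcing the restriction to be indecomposable over $G_r$; this is where the hypothesis $p \geq 2h-2$ enters and is the heart of the known proofs of Ballard and Jantzen. For small $p$ this confinement fails — as the present paper demonstrates in type $G_2$ with $p=2$, where a tilting module restricts to $G_r$ as a sum of several projective summands — so the tilting realization cannot be pushed through. In that regime one must drop the demand that the lift be tilting and instead attempt to lift the idempotent cutting out $Q_r(\lambda)$ inside $\St_r \otimes V$ by other means; producing (or ruling out) such a $G$-equivariant idempotent for all $p$ is the genuinely hard point that keeps the conjecture open.
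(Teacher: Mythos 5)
You have not proved the statement, and you could not have: Conjecture \ref{lifting} is the Humphreys--Verma Conjecture, which the paper records as open and does not prove --- on the contrary, the paper's main result destroys exactly the route you chose. Your plan is to take $M = T(\hat\lambda)$ with $\hat\lambda = 2(p^r-1)\rho + w_0\lambda$ and show $T(\hat\lambda)|_{G_r} \cong Q_r(\lambda)$; that is verbatim Donkin's Tilting Module Conjecture (Conjecture \ref{tilting}), and Theorem \ref{tilt:no} shows it fails for $G_2$ with $p=2$: one has $T(2,2)|_{G_1} \not\cong Q_1(0,0)$, and indeed $\soc_G \Delta(2,2) \cong k \oplus L(0,1)$ forces $T(2,2)$ to split into at least two projective summands over $G_1$. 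So the step you describe as ``the main obstacle'' is not merely hard below $p = 2h-2$; it is provably false there, and no character or linkage count can close it. Your fallback of lifting a $G$-equivariant idempotent inside $\St_r \otimes V$ is likewise constrained by the paper's closing observation: any hypothetical lift of $Q_1(0,0)$ cannot occur as a $G$-submodule --- in particular not as a $G$-summand --- of $\St \otimes \St$, though it might occur as a subquotient. To your credit, your final paragraph concedes all of this; but that concession means what you have written is an accurate survey of the known partial results (Ballard and Jantzen for $p \geq 2h-2$) together with an acknowledgment that the general case is open, not a proof, and it cannot be assessed as one.

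There is also a concrete error inside the sketch. You assert that $\St_r \otimes \St_r$ contains $T(\hat\lambda)$ as a $G$-summand for every $\lambda \in X_r$; the inequality $\hat\lambda \leq 2(p^r-1)\rho$ does not give this, and the paper's own computations refute it: for $G_2$, $p = 2$ and $\lambda = (1,0)$ one has $\hat\lambda = (1,2)$, yet Proposition \ref{St:tensor}(d) gives $\St \otimes \St \cong Q_1(0,0) \oplus Q_1(0,1)^{\oplus 2} \oplus \St^{\oplus 16}$ over $G_1$, with no $Q_1(1,0)$ summand, while $T(1,2)|_{G_1} \cong Q_1(1,0)$ by Theorem \ref{T:Q}(c); hence $T(1,2)$ is not a summand of $\St \otimes \St$. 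The correct ambient module is $\St_r \otimes L((p^r-1)\rho + w_0\lambda)$, whose highest weight is exactly $\hat\lambda$, so that $T(\hat\lambda)$ occurs as its highest-weight indecomposable summand and is in particular projective over $G_r$; this is the ``one particular $G$-summand of $\St_r \otimes L(\lambda)$'' that the paper alludes to when discussing the $p \geq 2h-2$ case. With that repair, your sketch is a faithful outline of the Ballard--Jantzen--Donkin argument in the large-prime regime, but for the conjecture as stated --- all $p$ and all $\lambda \in X_r$ --- no proof is currently known, and the results of this paper show that any eventual proof must produce $G$-structures that are not restrictions of indecomposable tilting modules.
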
 

The conjecture was first verified by Ballard for $p\geq 3h-3$ \cite{B} and then by Jantzen for $p\geq 2h-2$ \cite{J}, who further showed under this improved bound that the $G$-structure was unique up to isomorphism. Later, at a conference at MSRI in 1990, Donkin presented the following conjecture, predicting that a $G$-module structure on $Q_{r}(\lambda)$ arises from a specific tilting module which must be \textit{the} $G$-module structure whenever uniqueness of $G$-structure holds. 
 
\begin{conjecture}\label{tilting}  For all $\lambda\in X_{r}$, $T(2(p^{r}-1)\rho+w_{0}\lambda)|_{G_{r}}=Q_{r}(\lambda)$. 
\end{conjecture}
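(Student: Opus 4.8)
Because the statement is a conjecture, I will outline the strategy that proves it in the classical range $p \geq 2h-2$ and pinpoint the step that must break down for small $p$. Write $\widehat{\lambda} = 2(p^{r}-1)\rho + w_{0}\lambda$. The plan is to establish two facts: (A) that $T(\widehat{\lambda})|_{G_{r}}$ is projective over $G_{r}$, and (B) that $T(\widehat{\lambda})|_{G_{r}}$ is indecomposable with $G_{r}$-head $L(\lambda)$. Together these force $T(\widehat{\lambda})|_{G_{r}} = Q_{r}(\lambda)$, since an indecomposable projective $G_{r}$-module with head $L(\lambda)$ is by definition the projective indecomposable module $Q_{r}(\lambda)$.

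The engine for (A) is the Steinberg module $\text{St}_{r} = L((p^{r}-1)\rho)$, which is at once projective and injective over $G_{r}$ and is itself the tilting module $T((p^{r}-1)\rho)$. Set $\mu = (p^{r}-1)\rho + w_{0}\lambda$; a short check shows $\mu \in X_{r}$. The tensor product $\text{St}_{r} \otimes T(\mu)$ is again tilting (a tensor product of tilting modules is tilting) and is projective over $G_{r}$ (tensoring the projective $G_{r}$-module $\text{St}_{r}$ by any $G$-module stays projective). Its highest weight is $(p^{r}-1)\rho + \mu = \widehat{\lambda}$, occurring with multiplicity one, so $T(\widehat{\lambda})$ is a direct summand of $\text{St}_{r} \otimes T(\mu)$ with multiplicity one. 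As a summand of a $G_{r}$-projective module, $T(\widehat{\lambda})|_{G_{r}}$ is then projective, which proves (A). The top weight $\widehat{\lambda}$ also identifies $Q_{r}(\lambda)$ as the summand of $T(\widehat{\lambda})|_{G_{r}}$ carrying that weight, since $\widehat{\lambda}$ is precisely the highest weight of $Q_{r}(\lambda)$.

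For (B) one must show that no further indecomposable projective appears, equivalently that the $G_{r}$-socle of $T(\widehat{\lambda})$ is the single simple module $L(\lambda)$ (equivalently its $G_{r}$-head, because $T(\widehat{\lambda})$ is self-dual under the contravariant duality, which fixes $L(\lambda)$). Here one analyzes the $G_{r}$-composition factors produced by the good filtration of $T(\widehat{\lambda})$ and invokes linkage: when $p \geq 2h-2$ the alcove geometry separates the relevant weights and forces the $G_{r}$-socle to be simple, so that $T(\widehat{\lambda})|_{G_{r}}$ cannot split off a second projective summand.

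This last step is exactly where I expect the main obstacle to lie, and indeed where the conjecture must fail in general. The indecomposability of $T(\widehat{\lambda})$ upon restriction to $G_{r}$ depends delicately on the linkage and alcove control available only for large $p$; for small $p$ extra simple modules can enter the $G_{r}$-socle, so that the projective module $T(\widehat{\lambda})|_{G_{r}}$ splits as a direct sum of several projective indecomposables rather than the single $Q_{r}(\lambda)$. It is precisely this breakdown --- occurring for $T(2,2)$ in type $G_{2}$ with $p=2$, as flagged in the introduction --- that the paper turns into its counterexample.
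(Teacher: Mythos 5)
You were asked to prove Conjecture~\ref{tilting}, which is in fact the statement this paper exists to refute, so your decision not to attempt a proof but instead to sketch the classical range and locate the breaking point is the only defensible one, and your diagnosis is accurate. The positive half of your sketch is sound and, as your own argument shows, valid for \emph{every} prime: projectivity of $T(\hat{\lambda})|_{G_r}$ via the tilting summand of $\operatorname{St}_r \otimes T(\mu)$ with $\mu = (p^r-1)\rho + w_0\lambda$, and the occurrence of $Q_r(\lambda)$ as the $G_r$-summand carrying the top weight, are exactly the highest-weight considerations the paper itself invokes just before Theorem~\ref{T:Q}. So the entire content of the conjecture is your step (B), indecomposability of the restriction; your rendering of the classical proof of (B) as a linkage/socle analysis is a reasonable gloss, though the paper summarizes it differently (locating a particular $G$-summand of $\operatorname{St}_r \otimes L(\lambda)$, resting on Jantzen's lifting theorem for $p \geq 2h-2$).

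Where your proposal stops --- ``extra simple modules can enter the $G_r$-socle'' --- is where the paper's work begins, and its actual disproof is more indirect than the socle computation you envision. The paper first shows (Theorem~\ref{T:no2good}) that $\nabla(2,1)$ has no good $2$-filtration, uses this to build the quotient $M = T(2,1)/\operatorname{rad}_G^2 T(2,1)$ satisfying $\Hom_G(\St, \St \otimes M) = k$ (Proposition~\ref{M:good}), and then argues in Theorem~\ref{tilt:no}: if $T(2,2)|_{G_1} \cong Q_1(0,0)$, the $G_1$-socle of $\St \otimes \St$ would force $\St \otimes \St \cong T(2,2) \oplus T(2,1)^{\oplus 2} \oplus T(3,1)^{\oplus 2}$ as $G$-modules, whence $\dim \Hom_G(\St, \St \otimes M) = \dim \Hom_G(\St \otimes \St, M) \geq 2$, a contradiction. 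Your predicted mechanism --- a non-simple socle making $T(2,2)$ split over $G_1$ --- is confirmed in the paper only \emph{a posteriori} (the observation that $L(0,1)$ embeds in $T(2,2)$, with $\soc_G \Delta(2,2) \cong k \oplus L(0,1)$ verified by machine computation in a footnote); the formal proof deliberately avoids relying on that computation. So as a description of the mathematical landscape your proposal is correct and consistent with the paper, but it does not reconstruct the paper's argument: turning your heuristic into a proof of failure requires either the explicit socle computation or the paper's self-contained $\Hom$-counting route through the failure of the $(p,r)$-filtration property for $\nabla(2,1)$, neither of which your sketch supplies.
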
 

Conjecture~\ref{tilting} holds for $p\geq 2h-2$ and the proof under this bound entails locating one particular $G$-summand of $\text{St}_{r}\otimes L(\lambda)$. At the same conference at MSRI, another conjecture was introduced by Donkin that interrelates good filtrations with good $(p,r)$-filtrations via the Steinberg module. 

\begin{conjecture} \label{donkinconj} Let $M$ be a finite-dimensional $G$-module. Then $M$ has a good $(p,r)$-filtration if and only if $\operatorname{St}_r\otimes M$ has a good filtration. 
\end{conjecture}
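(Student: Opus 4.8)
The plan is to prove the two implications separately, using throughout the cohomological criterion for good filtrations: a finite-dimensional $G$-module $N$ has a good filtration if and only if $\Ext^1_G(\Delta(\mu),N)=0$ for all $\mu\in X_+$ (equivalently $\Ext^i_G(\Delta(\mu),N)=0$ for all $i\geq 1$). I would first record the analogous criterion for good $(p,r)$-filtrations, namely that $M$ has a good $(p,r)$-filtration if and only if $\Ext^1_G(\Delta^{(p,r)}(\mu),M)=0$ for all $\mu\in X_+$; this rests on the $\Ext$-orthogonality $\Ext^i_G(\Delta^{(p,r)}(\lambda),\nabla^{(p,r)}(\mu))=0$ for $i>0$ together with $\Hom_G(\Delta^{(p,r)}(\lambda),\nabla^{(p,r)}(\mu))=\delta_{\lambda\mu}k$, which follows from the tensor identity and the corresponding facts for $\nabla,\Delta$ over $G$ and $G_r$. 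I will also use repeatedly the self-duality $\St_r^{*}\cong\St_r$, the tensor identity $\St_r\otimes\nabla(\mu)^{(r)}\cong\nabla((p^r-1)\rho+p^r\mu)$, and the tensor-product theorem (Mathieu--Donkin--Wang): a tensor product of modules with good filtrations again has a good filtration.

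For the forward implication ($\Rightarrow$), suppose $M$ has a good $(p,r)$-filtration. Since $\St_r\otimes(-)$ is exact, a $(p,r)$-filtration of $M$ produces a filtration of $\St_r\otimes M$ whose sections are the modules $\St_r\otimes\nabla^{(p,r)}(\lambda)$, so it suffices to show each of these has a good filtration. Writing $\lambda=\lambda_0+p^r\lambda_1$ with $\lambda_0\in X_r$, one has
\[
\St_r\otimes\nabla^{(p,r)}(\lambda)=\bigl(\St_r\otimes L(\lambda_0)\bigr)\otimes\nabla(\lambda_1)^{(r)} .
\]
The essential input is the known fact that $\St_r\otimes L(\lambda_0)$ has a good filtration for every $\lambda_0\in X_r$. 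Granting this, the tensor identity and the tensor-product theorem are combined, with care taken that the interaction of the good filtration of $\St_r\otimes L(\lambda_0)$ with the Frobenius twist $\nabla(\lambda_1)^{(r)}$ is controlled globally rather than section by section, to conclude that $\St_r\otimes\nabla^{(p,r)}(\lambda)$ is good-filtered. This bookkeeping is the technical heart of the ``only if'' direction.

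For the reverse implication ($\Leftarrow$), assume $\St_r\otimes M$ has a good filtration, i.e. $\Ext^1_G(\Delta(\mu),\St_r\otimes M)=0$ for all $\mu$. Using self-duality of $\St_r$ and the tensor--Hom adjunction,
\[
\Ext^1_G(\Delta(\mu),\St_r\otimes M)\cong\Ext^1_G(\St_r\otimes\Delta(\mu),M),
\]
so the hypothesis reads $\Ext^1_G(\St_r\otimes\Delta(\mu),M)=0$ for all $\mu$. My goal is to deduce $\Ext^1_G(\Delta^{(p,r)}(\nu),M)=0$ for all $\nu$ and then invoke the $(p,r)$-criterion above. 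The strategy is a d\'evissage on highest weight: each $\St_r\otimes\Delta(\mu)$ carries a Weyl filtration whose sections are ordinary $\Delta(\sigma)$, so I would try to locate each $\Delta^{(p,r)}(\nu)$ within the family $\{\St_r\otimes\Delta(\mu)\}$---isolating a copy of $\Delta^{(p,r)}(\nu)$ after peeling off the sections of strictly larger highest weight---and transfer the vanishing of $\Ext^1$ from the ambient module down to it by induction.

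The hard part will be exactly this transfer. Preservation of good filtrations under $\St_r\otimes(-)$ is governed by the clean multiplicative behaviour of the preceding paragraph, but \emph{reflection} is far more delicate: although $\St_r\otimes\Delta(\mu)$ is Weyl-filtered, there is no reason for the $(p,r)$-standard modules $\Delta^{(p,r)}(\nu)$---which are not themselves Weyl-filtered---to occur as summands, nor for their multiplicities and the surrounding extension data to be rigid enough to peel the $\Ext$-vanishing off section by section. Determining whether $\St_r\otimes(-)$ reflects, and not merely preserves, good $(p,r)$-filtrations is the crux, and I expect this to be precisely where the argument either succeeds or breaks down; it is most fragile for small primes and small Coxeter number, the regime in which the extension structure among simple and (co)standard modules of small highest weight is least controlled.
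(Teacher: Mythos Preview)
The statement you are attempting to prove is a \emph{conjecture}, and the point of the paper is to \emph{disprove} it: the ($\Leftarrow$) direction is false.  For $G$ of type $G_2$ with $p=2$, the paper exhibits the induced module $\nabla(2,1)$ (and, more economically, a two-composition-factor quotient $M$ of $T(2,1)$) for which $\St\otimes M$ has a good filtration while $M$ has no good $2$-filtration.  So no argument along your lines for ($\Leftarrow$) can succeed; your own closing caveat (``this is precisely where the argument either succeeds or breaks down'') is on target, and the paper shows concretely that it breaks down.

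Two of your intermediate claims deserve scrutiny independently of this.  First, the ``analogous criterion'' that $M$ has a good $(p,r)$-filtration if and only if $\Ext^1_G(\Delta^{(p,r)}(\mu),M)=0$ for all $\mu$ is not a known fact: the required $\Ext$-orthogonality $\Ext^{>0}_G(\Delta^{(p,r)}(\lambda),\nabla^{(p,r)}(\mu))=0$ is not established in general (and indeed the counterexamples in this paper show it cannot be used as you intend).  Second, in the ($\Rightarrow$) direction your ``essential input'' that $\St_r\otimes L(\lambda_0)$ has a good filtration for every $\lambda_0\in X_r$ is \emph{equivalent} to Conjecture~\ref{donkinconj}($\Rightarrow$) itself, so invoking it as a ``known fact'' is circular.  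That direction is presently known only under restrictions such as $p\ge 2h-4$ (or for rank $\le 2$), and remains open in general; the paper leaves ($\Rightarrow$) standing while refuting ($\Leftarrow$).
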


We denote the two directions of the statement as follows:
\begin{itemize}
\item Conjecture~\ref{donkinconj}($\Rightarrow$): If $M$ has a good  $(p,r)$-filtration, then $\operatorname{St}_r\otimes M$ has a good filtration.
\item Conjecture~\ref{donkinconj}($\Leftarrow$): If $\operatorname{St}_r\otimes M$ has a good filtration, then $M$ has a good $(p,r)$-filtration.
\end{itemize}

Conjecture~\ref{donkinconj}($\Rightarrow$) is equivalent to $\text{St}_{r}\otimes L(\lambda)$ being a tilting module for all $\lambda\in X_{r}$. Andersen \cite{And} and later Kildetoft and Nakano \cite{KN} verified Conjecture~\ref{donkinconj}($\Rightarrow$) when $p\geq 2h-2$. In a recent paper, the authors lowered the bound to $p \geq 2h-4$ (cf. \cite{BNPS}). For rank 2 groups (including $G_{2}$), Conjecture~\ref{donkinconj}($\Rightarrow$) was proved for all $p$ in \cite{KN} and \cite{BNPS}. 

There are also strong relationships, established by Kildetoft and Nakano \cite{KN} and also by Sobaje \cite{So}, between these conjecture given by the
following hierarchy of implications: 
\begin{center}
\text{Conjecture}~\ref{donkinconj} \ \ $\Rightarrow$ \ \  \text{Conjecture}~\ref{tilting} \ \ $\Rightarrow$ \ \ \text{Conjecture}~\ref{donkinconj}($\Rightarrow$).
\end{center}
While we will provide counterexamples to Conjecture~\ref{tilting} and the full Conjecture~\ref{donkinconj}, we remark that Conjecture~\ref{donkinconj}($\Rightarrow$) may still hold for all $p$. A special case of Conjecture~\ref{donkinconj}($\Leftarrow$) was earlier posed by Jantzen \cite{J}. 

\begin{quest}\label{Jantzen-nabla} For $\lambda\in X_{+}$, does $\nabla(\lambda)$ admit a good $(p,r)$-filtration? 
\end{quest} 

Parshall and Scott affirmatively answered the aforementioned question if $p \ge 2h-2$ and the Lusztig Conjecture holds for the given prime and group \cite{PS}.   Recently, Andersen \cite{And2} has shown this for $p \geq (h-2)h$.

\section{Weyl modules and good $(p,r)$-filtrations for $G_{2}$} \label{S:Weylandgood} 

\subsection{Simple and Projective Modules} Assume throughout this section (and most of the remainder of the paper) that the root system of $G$ is of type $G_2$ and that the prime $p=2$.  We follow the Bourbaki ordering of the simple roots: $\alpha_1$ is the short root and $\alpha_2$ is the long root.  For $a,b \in \mathbb{Z}$, we denote by $(a,b)$ the weight $a\varpi_1+b\varpi_2$, where $\varpi_1$ and $\varpi_2$ are the fundamental dominant weights.  The set of restricted weights is
$$X_1 = \{(0,0), (1,0), (0,1), (1,1)\}.$$
Let $\St=\text{St}_{1}$ denote the first Steinberg module $L(1,1)$. The module $L(0,1) \cong \nabla(0,1) \cong \Delta(0,1)$ is the $14$-dimensional adjoint representation.  Among the four costandard $G$-modules of restricted highest weight, only $\nabla(1,0)$ is not simple, and we have that $\nabla(1,0)/L(1,0) \cong k$.  Every simple $G$-module is self-dual, and the weight lattice and root lattice coincide.

Since the characters of the simple $G$-modules of restricted highest weight are known here, it is possible to compute directly the dimensions of the projective indecomposable $G_1$-modules.  We recall in Table~\ref{table:1} some of the information provided by Humphreys in \cite[18.4, Table 4]{Hu}, originally due to Mertens \cite{M}.

\medskip
\FloatBarrier
\begin{table}[h]
\begin{tabular}{|c|c|c|}
\hline
$\la$ & $\dim L(\la)$ & $\dim Q_1(\la)$\\
\hline
$(0,0)$ & $1$ & $36\cdot 64$\\
$(1,0)$ & $6$ & $12\cdot 64$\\
$(0,1)$ & $14$ & $6\cdot 64$\\
$(1,1)$ & $64$ & $64$\\
\hline
\end{tabular}
\caption{Dimensions of simple and projective $G_1$-modules}
\label{table:1}
\end{table}
\FloatBarrier

\subsection{$\text{Ext}^{1}$-calculations} \label{S:Ext1} In our analysis of the structure of the Weyl modules we will need the following $\text{Ext}^{1}$-calculations that appear in Dowd and Sin \cite[Lemma 3.3]{DS}, part (c) of which dates back to work of Jantzen \cite{J91}. 

\begin{prop} \label{DS-Ext}
One has the following isomorphisms as $G$-modules:  
\begin{itemize}
\item[(a)] $\Ext_{G_1}^1(L(1,0),L(0,1))= 0$ 
\item[(b)] $\Ext_{G_1}^1(L(0,1),L(0,1)) = 0 $
\item[(c)] $\Ext_{G_1}^1(k,L(0,1)) \cong \nabla(1,0)^{(1)}$.
\end{itemize}
\end{prop}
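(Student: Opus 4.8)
The plan is to realize each group as a first Frobenius-kernel cohomology group, $\Ext^1_{G_1}(L(\lambda),L(\mu)) \cong \HH^1(G_1, L(\lambda)^{*}\otimes L(\mu))$, and to exploit the fact that $G_1$ acts trivially on such a group, so that the $G$-action factors through the Frobenius quotient $G/G_1\cong G^{(1)}$; that is, the group is of the form $M^{(1)}$ for a $G$-module $M$, and in particular all its $T$-weights are divisible by $p=2$. Using the self-duality of every simple $G$-module recorded above, I would first rewrite the three groups as $\HH^1(G_1, L(0,1)\otimes L(1,0))$, $\HH^1(G_1, L(0,1)\otimes L(0,1))$, and $\HH^1(G_1, L(0,1))$, the last because $L(0,1)=\g$ is the adjoint representation. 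Two constraints then organize the computation: a weight constraint, namely that the weights of $M$ must be of small size, controlled by the weights of the relevant tensor product, and divisible by $2$; and the $G_1$-linkage principle, which restricts which pairs of restricted weights can admit a nonzero extension.

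For the vanishing statements (a) and (b), I would argue that these constraints leave essentially no room. Any composition factor of $M$ is an $L(\sigma)$ with $2\sigma$ occurring among the weights of the corresponding tensor product, and for the small weights available in $L(0,1)\otimes L(1,0)$ and $L(0,1)\otimes L(0,1)$ the only surviving candidate is the trivial module. This candidate is then eliminated through the five-term exact sequence of the Lyndon--Hochschild--Serre spectral sequence for $G_1\trianglelefteq G$, which expresses the $G$-invariants of $\HH^1(G_1,-)$ in terms of low-degree $G$-extensions; here one uses that self-extensions $\Ext^1_G(L(0,1),L(0,1))$ of a simple $G$-module vanish. As an independent consistency check, the multiplicities of the simple modules in the second Loewy layers of the projectives $Q_1(\lambda)$, whose dimensions are recorded in Table~\ref{table:1}, must be compatible with the resulting $\Ext^1_{G_1}$ pattern.

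The crux is part (c), the computation $\HH^1(G_1,\g)\cong \nabla(1,0)^{(1)}$, and this is where I expect the real work to lie, since one must produce the seven-dimensional answer exactly and identify its $G$-structure rather than merely bound weights. I would compute $\HH^1(G_1,\g)$ via its identification with a restricted Lie-algebra cohomology group, essentially outer derivations of $\g$ wrapped in a Frobenius twist, which for type $G_2$ at $p=2$ is genuinely exceptional and is the source of the nonzero class. After matching the $T$-weights of the answer to those of $\nabla(1,0)^{(1)}$, I would distinguish the costandard module $\nabla(1,0)^{(1)}$ from the a priori alternative $k^{(1)}\oplus L(1,0)^{(1)}$ by exhibiting the nonsplit extension: since $\nabla(1,0)$ is the unique indecomposable with socle $L(1,0)$ and head $k$ (as $\nabla(1,0)/L(1,0)\cong k$), it suffices to show the relevant extension class is nonzero, again through the five-term sequence relating $\HH^1(G_1,\g)$ to $\Ext^1_G$ data for small highest weights. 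The main obstacle is precisely this simultaneous determination of the exact dimension and the indecomposable $G$-module structure in the exceptional characteristic $p=2$, rather than the weight bookkeeping or the vanishing arguments, which are comparatively formal.
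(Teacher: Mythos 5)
The first thing to note is that the paper does not prove this proposition at all: it is imported wholesale from Dowd--Sin \cite[Lemma 3.3]{DS}, with part (c) credited to Jantzen \cite{J91}. So the comparison must be with whether your sketch would stand on its own, and as written it has a genuine gap in the vanishing parts. Your claim that for (a) and (b) ``the only surviving candidate is the trivial module'' after weight bookkeeping is false. For (a), with $M = L(1,0)\otimes L(0,1)$, the weight $(2,0)$ does occur in $M$ (the short root $(1,0)$ is a weight of both factors), so $L(1,0)^{(1)}$ passes your weight test; for (b), with $M = L(0,1)^{\otimes 2}$, both $(2,0)$ and $(0,2)$ occur, so $L(1,0)^{(1)}$ and $L(0,1)^{(1)}$ both survive. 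Nor does the $G_1$-linkage principle rescue you in (a): since the weight and root lattices coincide for $G_2$ and $p=2$, one checks directly that $s_1s_2(\lambda+\rho) = s_1s_2(2,1) = (-5,4) \equiv (1,2) = \mu + \rho \bmod 2X$, so $(1,0)$ and $(0,1)$ lie in the same $G_1$-block and the extension group is not formally zero. Your five-term/inflation-restriction argument disposes only of the candidate $k$ (via $\Ext^1_G(L(0,1),L(0,1))=0$, correctly); eliminating the twisted simple candidates requires computing groups such as $\Ext^1_G\bigl(L(1,0), L(0,1)\otimes L(1,0)^{(1)}\bigr)$, or working directly in restricted Lie algebra cohomology --- and that is precisely the nontrivial content of the Dowd--Sin computation, which your outline does not supply.

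For part (c), your mechanism is sound and is essentially Jantzen's route: since $\g = L(0,1)$ is a simple $G$-module one has $\g^{G_1}=0$ and $Z(\g)=0$, so the standard six-term sequence for restricted cohomology identifies $\HH^1(G_1,\g)$ with $\HH^1(\g,\g) = \mathrm{Der}(\g)/\ad \g$ (note that this identification itself \emph{needs} $Z(\g)=0$, which you should make explicit), and the nonsplitness of the resulting extension of $k^{(1)}$ by $L(1,0)^{(1)}$ does follow from inflation-restriction once one knows $\HH^1(G,\g)=0$, a vanishing you use only implicitly. But the decisive step --- that the space of outer derivations is exactly $7$-dimensional with the $T$-weights of $\nabla(1,0)^{(1)}$ --- is deferred with ``I would compute,'' which is exactly the point at which the paper itself defers to \cite{J91}. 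In sum: the toolkit (Frobenius untwisting, weight bounds, linkage, five-term sequences) is the right one and matches the spirit of the cited sources, but the stated reduction in (a)/(b) is incorrect as it stands, and the crux of (c) is asserted rather than carried out, so this is a plan for a proof rather than a proof.
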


\subsection{Decomposition of $\St\otimes L(\lambda)$, $\lambda\in X_{1}$} Recall that $\St$ is projective over the first Frobenius kernel $G_1$.  Hence, for $\la \in X_1$, $\St\otimes L(\la)$ is also projective over $G_1$.  As the highest weight of $\St\otimes L(\la)$ is $\rho + \la = 2\rho - (\rho - \la)$, which is the same as that of $Q_1(\rho - \la)$, the module $Q_1(\rho - \la)$ is necessarily a $G_1$-summand of $\St\otimes L(\la)$.  The following proposition gives a precise decomposition of $\St\otimes L(\la)$ for each $\la \in X_1$.   

\begin{prop}\label{St:tensor}
We have the following decompositions into projective indecomposable modules over $G_1$:
\begin{itemize}
\item[(a)] $\St \otimes k \cong \St$ 
\item[(b)] $\St \otimes L(1,0) \cong Q_1(0,1)$ 
\item[(c)] $\St \otimes L(0,1)  \cong Q_1(1,0) \oplus \St^{\oplus 2}$ 
\item[(d)] $\St \otimes \St  \cong Q_1(0,0) \oplus Q_1(0,1)^{\oplus 2} \oplus \St^{\oplus 16}$.  
\end{itemize}
\end{prop}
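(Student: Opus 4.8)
My plan is to compute each decomposition by combining dimension counts from Table~\ref{table:1} with the $G_1$-projectivity of $\St$ and the block/weight constraints coming from the fact that each $\St\otimes L(\la)$ is a self-dual projective $G_1$-module whose indecomposable summands are among the $Q_1(\mu)$ (and $\St=Q_1(1,1)$). The key structural input is that since $\St$ is projective over $G_1$ and projectivity is preserved under tensoring, $\St\otimes L(\la)$ is projective, hence a direct sum of the $Q_1(\mu)$ with $\mu\in X_1$. Thus the entire problem reduces to determining the multiplicities $[\St\otimes L(\la):Q_1(\mu)]$, and the main tool will be that for a projective $G_1$-module $P$, the multiplicity of $Q_1(\mu)$ as a summand equals $\dim\Hom_{G_1}(Q_1(\mu),P)=[\,\text{head of }P:L(\mu)\,]=\dim\Hom_{G_1}(P,L(\mu))$, or equivalently the composition multiplicity $[P:L(\mu)]$ weighted against the known $[Q_1(\nu):L(\mu)]$.

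First I would dispose of part (a), which is immediate: $\St\otimes k=\St$. For the remaining parts I would proceed by a dimension and multiplicity bookkeeping. Using the known dimensions $\dim L(0,0)=1$, $\dim L(1,0)=6$, $\dim L(0,1)=14$, $\dim\St=64$, and $\dim Q_1(\la)$ from Table~\ref{table:1}, each equation becomes a dimension identity that must be matched. For (b), $\dim(\St\otimes L(1,0))=64\cdot 6=6\cdot 64=\dim Q_1(0,1)$; since the highest weight forces $Q_1(0,1)$ to appear as a summand (as noted, $\rho+\la$ matches $\rho-\la$ shifted appropriately, giving $Q_1(0,1)$ here), and the dimensions already agree exactly, the module must equal $Q_1(0,1)$ on the nose. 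For (c), $\dim(\St\otimes L(0,1))=64\cdot 14=896$, and $\dim Q_1(1,0)+2\cdot 64=12\cdot 64+128=768+128=896$ matches; one then argues that $Q_1(1,0)$ must appear (highest-weight argument) and that the Steinberg summands are forced by computing $\dim\Hom_{G_1}(\St,\St\otimes L(0,1))=\dim\Hom_{G_1}(\St\otimes L(0,1),\St)$ and using the self-duality of $\St$. For (d), $\dim(\St\otimes\St)=64^2=4096$, and $\dim Q_1(0,0)+2\dim Q_1(0,1)+16\cdot 64=36\cdot 64+2\cdot 6\cdot 64+16\cdot 64=(36+12+16)\cdot 64=64\cdot 64$ matches.

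The heart of the argument, and the step I expect to be the main obstacle, is pinning down the exact multiplicities of the summands rather than merely checking that a proposed decomposition has the right total dimension. Dimension counting alone cannot distinguish, say, the claimed decomposition in (d) from other combinations of $Q_1(\mu)$ with the same total dimension. To rule out the alternatives I would compute, for each restricted $\mu$, the multiplicity $[\St\otimes L(\la):Q_1(\mu)]$ as a summand via $\dim\Hom_{G_1}(\St\otimes L(\la),L(\mu))=\dim\Hom_{G_1}(L(\la),\St\otimes L(\mu))$, using that $\St$ is self-dual and $G_1$-projective-injective, together with the composition factors of the relevant Weyl/costandard modules and the $\Ext^1_{G_1}$ vanishing and nonvanishing statements recorded in Proposition~\ref{DS-Ext}. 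In particular, the values in Proposition~\ref{DS-Ext}(a)--(c) govern which $L(\mu)$ can sit in the socle/head of the tensor products, and hence which $Q_1(\mu)$ genuinely split off. Once each multiplicity is computed independently, the dimension identities above serve as a consistency check confirming that no summand has been missed.
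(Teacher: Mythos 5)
Your overall architecture---$G_1$-projectivity of $\St\otimes L(\la)$, summand multiplicities read off from head/socle multiplicities, dimension bookkeeping against Table~\ref{table:1}, with (a) trivial and (b) forced by the highest-weight observation plus an exact dimension match---coincides with the paper's. But there is a genuine gap at precisely the step you yourself flag as the heart of the matter: you never actually compute the multiplicities. Your proposed mechanism, $\dim\Hom_{G_1}(\St\otimes L(\la),L(\mu))=\dim\Hom_{G_1}(L(\la),\St\otimes L(\mu))$, is circular in the decisive case $\la=\mu=(1,1)$: it expresses the Steinberg multiplicity in $\St\otimes\St$ in terms of the socle of $\St\otimes\St$, the very thing being determined. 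Moreover, the tools you cite for evaluating these Hom spaces---Proposition~\ref{DS-Ext} and composition factors of Weyl/costandard modules---do not deliver the numbers: the $\Ext^1_{G_1}$ data between simples governs \emph{second} socle layers (the paper uses it later, in Proposition~\ref{P:socle-radical}), not the socle of a tensor product. The engine of the paper's proof, absent from your proposal, is the isomorphism $\Hom_{G_1}(\St,\St\otimes M)\cong\Hom_{G_1}(\St\otimes\St,M)\cong M^{T_1}$, where $T_1$ is the Frobenius kernel of the maximal torus: the multiplicity of $\St$ as a summand of $\St\otimes M$ equals the total multiplicity of weights of $M$ lying in $2X$. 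For $M=L(0,1)$ this is the two-dimensional zero weight space, producing $\St^{\oplus 2}$ in (c); dually, the multiplicity of $L(\la)$ in $\soc_{G_1}(\St\otimes\St)$ equals $\dim L(\la)^{T_1}$, and for (d) one must count the even weights of $\St$ itself, which requires a weight table (Lübeck) and yields $16$; the paper also invokes the tilting structure of $\St\otimes\St$ to identify the $G$-module form of the socle, $k\oplus L(0,1)^{\oplus 2}\oplus(\St\otimes T(1,0)^{(1)})^{\oplus 2}$, with $\St\otimes T(1,0)^{(1)}\cong\St^{\oplus 8}$ over $G_1$.

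For what it is worth, your scheme can be repaired without the full $T_1$-invariants formula, but only by making the order of deductions non-circular, which you do not do. In (c), once the highest-weight argument splits off one copy of $Q_1(1,0)$, the leftover $896-768=128$ dimensions can only be filled by copies of $\St$, since every other projective indecomposable has dimension at least $384$; this pins (c) by arithmetic alone. Then for (d), adjunction gives that the multiplicity of $Q_1(\mu)$ in $\St\otimes\St$ equals $[\St\otimes L(\mu):\St]$, and since $\St$ is projective it cannot occur as a composition factor of the indecomposable $Q_1(\mu)$ for $\mu\neq(1,1)$; hence parts (a)--(c) give multiplicities $1,0,2$ for $\mu=(0,0),(1,0),(0,1)$, and your dimension identity then \emph{forces} (rather than merely checks) exactly $16$ Steinberg summands. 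As written, however, with $\dim\Hom_{G_1}(\St,\St\otimes L(0,1))$ left unevaluated and the hard case pointed at tools that cannot compute it, the proof is incomplete.
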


\begin{proof} The first isomorphism is immediate, and the second follows by the module dimensions given in Table~\ref{table:1}.  To get the other two, we use the fact that for any $G$-module $M$,
$$\Hom_{G_1}(\St, \St \otimes M) \cong \Hom_{G_1}(\St \otimes \St, M) \cong M^{T_1},$$
where $T_1$ is the Frobenius kernel of the maximal torus $T$.
Now the weight $0$ appears twice in $L(0,1)$, so that $\St^{\oplus 2} \subseteq \St \otimes L(0,1)$.  There is also an embedding of $L(1,0)$ into $\St \otimes L(0,1)$.  The dimensions in Table~\ref{table:1} then imply that (c) holds.

Finally, the $G_1$-socle of $\St \otimes \St$ is determined by all $L(\la)^{T_1}$ for $\la \in X_1$.  Using a table of weights for $G$-modules (see for example \cite{L}) and the fact that $\St \otimes \St$ is a tilting module, one finds that
$$\soc_{G_1} (\St \otimes \St) \cong k \oplus L(0,1)^{\oplus 2} \oplus  (\St \otimes T(1,0)^{(1)})^{\oplus 2},$$ when viewed as a $G$-module. Note that $\St \otimes T(1,0)^{(1)} \cong \St^{\oplus 8}$ as a $G_1$-module, 
proving (d).
\end{proof}

For $\lambda\in X_{1}$, we know that $\St \otimes L(\lambda)$ is a tilting module \cite{KN} of highest weight $\rho + \la$.   Hence, the indecomposable tilting module $T(\rho + \la)$ embeds in $\St\otimes L(\lambda)$.  Furthermore, the $G_1$-Steinberg block component of any $G$-module splits off as a summand over $G$.  Thus we conclude from Proposition~\ref{St:tensor}:

\begin{theorem}\label{T:Q}
Over $G_1$ there are isomorphisms
\begin{itemize}
\item[(a)] $T(1,1)\cong \St$ 
\item[(b)] $T(2,1) \cong Q_1(0,1)$ 
\item[(c)] $T(1,2) \cong Q_1(1,0)$.
\end{itemize}
\end{theorem}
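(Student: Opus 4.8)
The plan is to extract all three isomorphisms from Proposition~\ref{St:tensor} together with the two facts recorded just above the statement: that $\St\otimes L(\la)$ is a tilting $G$-module of highest weight $\rho+\la$, and that the $G_1$-Steinberg block summand of any $G$-module splits off over $G$. The guiding principle is that a $G$-direct summand is in particular a $G_1$-direct summand, so whenever the $G_1$-restriction of a $G$-summand is an indecomposable projective $Q_1(\mu)$, that summand is already indecomposable over $G$; being tilting, it must then be the indecomposable tilting module attached to its own highest weight.

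First I would dispatch (a) and (b). Part (a) is immediate, since $\St\otimes k=\St=L(1,1)=T(1,1)$. For (b), Proposition~\ref{St:tensor}(b) gives $\St\otimes L(1,0)\cong Q_1(0,1)$ over $G_1$; as $Q_1(0,1)$ is indecomposable over $G_1$ and any $G$-decomposition of $\St\otimes L(1,0)$ is a fortiori a $G_1$-decomposition, the module $\St\otimes L(1,0)$ is indecomposable over $G$. It is tilting of highest weight $\rho+(1,0)=(2,1)$, hence equals $T(2,1)$, and restricting to $G_1$ yields (b).

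The real content is (c). Here Proposition~\ref{St:tensor}(c) reads $\St\otimes L(0,1)\cong Q_1(1,0)\oplus\St^{\oplus 2}$ over $G_1$, so the tensor product is no longer $G_1$-indecomposable, and I must pin down which indecomposable tilting summand restricts to $Q_1(1,0)$. The second fact lets me write $\St\otimes L(0,1)=A\oplus B$ as $G$-modules, where $B$ is the $G_1$-Steinberg block component with $B|_{G_1}\cong\St^{\oplus 2}$ and $A|_{G_1}\cong Q_1(1,0)$. Exactly as in (b), $A$ is indecomposable over $G$ and tilting, so $A\cong T(\mu)$ for a single $\mu$; everything reduces to proving $\mu=(1,2)$.

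I expect this last matching to be the main obstacle, and I would resolve it by comparing restricted components of composition factors. Since $\St\otimes L(0,1)$ is tilting of highest weight $(1,2)$ and that weight is maximal (hence of multiplicity one), exactly one indecomposable summand, namely $T(1,2)$, carries it; by Krull--Schmidt this summand sits inside either $A$ or $B$, and the only thing to check is that it lands in $A$. By Steinberg tensor product, $L(1,2)=L(1,0)\otimes L(0,1)^{(1)}$, so over $G_1$ every composition factor of $L(1,2)$ is $L(1,0)$; but every $G_1$-composition factor of $B\cong\St^{\oplus 2}$ is $L(1,1)$. Thus $L(1,2)$ cannot be a composition factor of $B$, which forces $T(1,2)\subseteq A$ and hence $A\cong T(1,2)$. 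Restricting to $G_1$ then gives (c).
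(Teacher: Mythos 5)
Your proposal is correct and follows essentially the same route as the paper: both rest on Proposition~\ref{St:tensor} together with the two facts that $\St\otimes L(\la)$ is tilting of highest weight $\rho+\la$ and that the $G_1$-Steinberg block component splits off over $G$, so that the non-Steinberg summand is $G$-indecomposable because its $G_1$-restriction is an indecomposable $Q_1(\mu)$. Your extra step in (c) identifying the summand as $T(1,2)$ (via the Steinberg tensor product factorization of $L(1,2)$, ruling out the Steinberg block component) merely makes explicit a highest-weight matching the paper leaves implicit, and it is correct.
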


One can show that these are the unique $G$-structures on these modules, by showing that any $G$-structure on $Q_1(1,0)$ or on $Q_1(0,1)$ must admit a good filtration 
(a more detailed explanation of this will be provided in a forthcoming paper).

\subsection{} There exists a surjective homomorphism of $G$-modules
$$T(2,1) \twoheadrightarrow \nabla(2,1).$$
Since $T(2,1) \cong Q_1(0,1)$, $L(0,1)$ is its unique semisimple quotient over $G_1$, and therefore the same holds over $G$ since every simple $G$-module is semisimple over $G_1$.  These facts are then true of its homomorphic image $\nabla(2,1)$.  That is,
$$\textup{rad}_{G_1} \nabla(2,1) = \textup{rad}_G \nabla(2,1)$$
and
$$\nabla(2,1)/\textup{rad}_G \nabla(2,1) \cong L(0,1).$$
Since $T(2,1) \cong Q(0,1)$ as a $G_1$-module, the $G_1$-socle of $T(2,1)$ is $L(0,1)$. 

We now want to compute the second layer of the radical series of $\nabla(2,1)$. This will be accomplished by 
calculating the second socle layer of $T(2,1)$ using the $\Ext^1$-results of Proposition \ref{DS-Ext}.

\begin{prop}\label{P:socle-radical} There exist the following isomorphisms of $G$-modules: 
\begin{itemize} 
\item[(a)] $\soc_{G_1}^2 T(2,1)/ \soc_{G_1} T(2,1) \cong \nabla(1,0)^{(1)}$
\item[(b)] $\soc_{G}^2 T(2,1)/\soc_{G}T(2,1)\cong L(1,0)^{(1)}$
\item[(c)] $\textup{rad}_G \nabla(2,1)/\textup{rad}_G^2 \nabla(2,1) \cong L(1,0)^{(1)}$. 
\end{itemize} 
\end{prop}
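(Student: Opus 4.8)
The three statements are tightly linked, so the plan is to establish (a) first, then deduce (b) and (c) from it. The key input is the $\operatorname{Ext}^1$ computation in \cref{DS-Ext}, combined with the identification $T(2,1)\cong Q_1(0,1)$ from \cref{T:Q}(b) and the fact that $\soc_{G_1}T(2,1)\cong L(0,1)$ established in the preceding subsection.

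For part (a), I would argue as follows. Since $T(2,1)\cong Q_1(0,1)$ is the injective hull of $L(0,1)$ over $G_1$, the second socle layer $\soc_{G_1}^2 T(2,1)/\soc_{G_1} T(2,1)$ records exactly which simple $G_1$-modules extend $L(0,1)$, with multiplicities given by the dimensions of the relevant $\operatorname{Ext}^1_{G_1}$ groups. Concretely, for each simple $G_1$-module $L$, the multiplicity of $L$ in this layer equals $\dim\operatorname{Ext}^1_{G_1}(L, L(0,1))$. \cref{DS-Ext}(a) and (b) kill the contributions of $L(1,0)$ and $L(0,1)$, so the only simple modules that can appear are those appearing in $\operatorname{Ext}^1_{G_1}(k, L(0,1))$. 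By \cref{DS-Ext}(c), this $\operatorname{Ext}^1$-group is isomorphic as a $G$-module to $\nabla(1,0)^{(1)}$. Interpreting this $G$-module data as the second socle layer requires care: the point is that $\operatorname{Ext}^1_{G_1}(L(\lambda_0)\otimes N^{(1)}, L(0,1))$ carries a $G/G_1\cong G$-action, and the Frobenius-twisted module $\nabla(1,0)^{(1)}$ is precisely the $G$-module structure on the second layer lying over the $G_1$-socle $L(0,1)$. This gives (a).

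For part (b), I would pass from the $G_1$-socle structure to the $G$-socle structure. The second $G$-socle layer is a $G$-submodule of the second $G_1$-socle layer, so part (a) constrains it to be a $G$-submodule of $\nabla(1,0)^{(1)}$. Since $\nabla(1,0)$ has socle $L(1,0)$ with $\nabla(1,0)/L(1,0)\cong k$, the Frobenius twist $\nabla(1,0)^{(1)}$ is a $G$-module with socle $L(1,0)^{(1)}$. The $G$-socle of the layer must be semisimple, and I would argue that the relevant layer is exactly $L(1,0)^{(1)}$; the trivial quotient $k^{(1)}\cong k$ of $\nabla(1,0)^{(1)}$ does not appear in the second $G$-socle layer because $k$ already sits deeper in the module (or does not extend $L(0,1)$ in a way compatible with the $G$-structure at this stage). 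This yields (b).

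For part (c), I would dualize. The surjection $T(2,1)\twoheadrightarrow\nabla(2,1)$ together with the self-duality of simple $G$-modules (noted in the preliminaries) lets me translate socle-series data for $T(2,1)$ into radical-series data for $\nabla(2,1)$. Since $\nabla(2,1)/\operatorname{rad}_G\nabla(2,1)\cong L(0,1)$ and $\operatorname{rad}_{G_1}\nabla(2,1)=\operatorname{rad}_G\nabla(2,1)$ were already established, the first radical quotient matches the $G$-socle structure of $T(2,1)$ under duality, and the next radical layer $\operatorname{rad}_G\nabla(2,1)/\operatorname{rad}_G^2\nabla(2,1)$ corresponds to the second $G$-socle layer computed in (b), giving $L(1,0)^{(1)}$.

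\textbf{Main obstacle.}
I expect the delicate step to be part (a), specifically the passage from the abstract $\operatorname{Ext}^1$-group in \cref{DS-Ext}(c) to its interpretation as the second socle layer of $T(2,1)$ as a \emph{$G$-module} (not merely as a $G_1$-module). One must verify that the $G$-module isomorphism $\operatorname{Ext}^1_{G_1}(k,L(0,1))\cong\nabla(1,0)^{(1)}$ genuinely transports to the socle-layer structure with its induced $G$-action, and that no other simple types contribute once (a) and (b) of \cref{DS-Ext} are invoked. The subsequent descent to the $G$-socle in (b) — deciding that the layer is $L(1,0)^{(1)}$ rather than the full $\nabla(1,0)^{(1)}$ or its trivial quotient — is the second point requiring genuine justification rather than routine bookkeeping.
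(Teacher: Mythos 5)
Your proposal is correct and is essentially the paper's own argument: both proofs run on the same inputs, namely $T(2,1)\cong Q_1(0,1)$, the $G$-equivariant degree-shift isomorphism identifying the second $G_1$-socle layer with $\Ext^1_{G_1}(-,L(0,1))$, the computations in \cref{DS-Ext}, and dualization for part (c). The only deviations are in bookkeeping: the paper first pins down (b) by noting the $7$-dimensional layer can only have $G$-composition factors $k$ and $L(1,0)^{(1)}$ and that $k$ does not extend $L(0,1)$ over $G$, then records (a), whereas you transport the $G$-structure from \cref{DS-Ext}(c) to get (a) first and read (b) off the socle of $\nabla(1,0)^{(1)}$; and for (c) the paper dualizes the inclusion $\Delta(2,1)\subseteq T(2,1)$ (explicitly noting the relevant layer is nonzero, a point you should add, since your surjection $T(2,1)\twoheadrightarrow\nabla(2,1)$ a priori only gives a quotient of the simple module $L(1,0)^{(1)}$) --- none of which changes the substance.
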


\begin{proof} (a) and (b): For $\la\in X_{1}$, one has isomorphisms
\begin{align*}
\Hom_{G_1}(L(\la),T(2,1)/L(0,1)) & \cong \Hom_{G_1}(L(\la),Q_1(0,1)/L(0,1))\\
& \cong \Ext^1_{G_1}(L(\la),L(0,1)),\\
\end{align*}
where the first isomorphism holds since $T(2,1) \cong Q_1(0,1)$, and the second comes from degree shifting in cohomology. Proposition~\ref{DS-Ext} then establishes that
$$\soc_{G_1}^2 T(2,1)/\soc_{G_1} T(2,1)$$
is $7$-dimensional and is trivial as a $G_1$-module.  Considering this, as a $G$-module, its only possible composition factors are $k$ and $L(1,0)^{(1)}$.  Since $k$ does not extend $L(0,1)$ nontrivially over $G$, we conclude that
$$\soc_{G}^2 T(2,1)/\soc_{G} T(2,1) \cong L(1,0)^{(1)},$$
and that
$$\soc_{G_1}^2 T(2,1)/\soc_{G_1} T(2,1) \cong \nabla(1,0)^{(1)}$$
(which agrees with the $G$-module structure in Proposition~\ref{DS-Ext}; this extended argument is included to be precise on the inference of $G$-module structure).

(c): Every tilting $G$-module and every simple $G$-module is self-dual, and $\Delta(2,1)^* \cong \nabla(2,1)$, so we will work in the dual situation.  We have that $\Delta(2,1) \subseteq T(2,1)$, therefore
$$\soc_G^2 \Delta(2,1)/\soc_G  \Delta(2,1) \subseteq \soc_G^2 T(2,1)/\soc_G T(2,1) \cong L(1,0)^{(1)}.$$
But, $\soc_G^2 \Delta(2,1)/\soc_{G} \Delta(2,1)\ne 0$, therefore $\soc_G^2 \Delta(2,1)/\soc_{G} \Delta(2,1) \cong L(1,0)^{(1)}$.

Finally, one has 
$$\textup{rad}_G \nabla(2,1)/\textup{rad}_G^2 \nabla(2,1) \cong (\soc_G^2 \Delta(2,1)/\soc_{G} \Delta(2,1))^* \cong L(1,0)^{(1)}.$$
\end{proof}

\subsection{} This following example answers Question \ref{Jantzen-nabla} in the negative, and it is also a counterexample to Conjecture \ref{donkinconj}($\Leftarrow$), since $\St\otimes\nabla(2,1)$ has a good filtration. 

\begin{theorem}\label{T:no2good}
The module $\nabla(2,1)$ for the group of type $G_2$ does not have a good $2$-filtration.
\end{theorem}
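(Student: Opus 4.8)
The plan is to show that a good $2$-filtration of $\nabla(2,1)$ would force an incompatibility with the radical layer structure computed in \cref{P:socle-radical}(c). First I would ask which factors $\nabla^{(2,1)}(\mu)$ could even appear. Since $\nabla(2,1)$ has highest weight $(2,1)$ and every good $(2,1)$-filtration factor $\nabla^{(2,1)}(\mu) = L(\mu_0)\otimes\nabla(\mu_1)^{(1)}$ has highest weight $\mu \le (2,1)$, the top factor must be the one whose highest weight is $(2,1)$ itself. Writing $(2,1) = (0,1) + 2\cdot(1,0)$ with $(0,1)\in X_1$ and $(1,0)\in X_+$, this forces the top factor to be $\nabla^{(2,1)}(2,1) = L(0,1)\otimes\nabla(1,0)^{(1)}$. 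I would record its dimension, $14\cdot 7 = 98$, and compare with $\dim\nabla(2,1)$; the remaining factors must account for the complement and must have strictly smaller highest weights.

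Next I would extract structural constraints from \cref{P:socle-radical}. We know $\nabla(2,1)/\operatorname{rad}_G\nabla(2,1)\cong L(0,1)$ is the simple head, and $\operatorname{rad}_G\nabla(2,1)/\operatorname{rad}_G^2\nabla(2,1)\cong L(1,0)^{(1)}$. The crucial observation is that $L(0,1)$ occurs as a composition factor of $\nabla(2,1)$ (it is the head), so \emph{if} the top good-$(p,1)$-filtration factor is $L(0,1)\otimes\nabla(1,0)^{(1)}$, then the composition factors of that single factor already include $L(0,1)\otimes L(1,0)^{(1)}$ (from $\nabla(1,0)^{(1)}$ having factors $L(1,0)^{(1)}$ and $k$) and $L(0,1)\otimes k = L(0,1)$. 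I would then tabulate the composition factors of $L(0,1)\otimes\nabla(1,0)^{(1)}$ as a $G$-module and check them against the known composition factors of $\nabla(2,1)$, paying attention to multiplicities of $L(0,1)$ and to whether the head $L(0,1)$ and the second radical layer $L(1,0)^{(1)}$ are consistent with this factor sitting at the top.

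The decisive step is to derive a contradiction in the radical (equivalently socle) filtration. The head of $L(0,1)\otimes\nabla(1,0)^{(1)}$ as a $G$-module need not be $L(0,1)$; more importantly, in a good $(p,1)$-filtration the factor $\nabla^{(2,1)}(2,1)$ sits as a \emph{quotient} (the filtration is by submodules with these as successive quotients, top to bottom when read appropriately). I expect the contradiction to come from comparing the second radical layer: \cref{P:socle-radical}(c) pins $\operatorname{rad}_G\nabla(2,1)/\operatorname{rad}_G^2\nabla(2,1)\cong L(1,0)^{(1)}$ exactly, with no other factor. A good $2$-filtration would have to realize $\nabla(2,1)$ as an extension of $L(0,1)\otimes\nabla(1,0)^{(1)}$ by lower factors, and I would show that the submodule structure of $L(0,1)\otimes\nabla(1,0)^{(1)}$ itself (which contains $L(0,1)\otimes k\cong L(0,1)$ as a \emph{submodule}, coming from the $\nabla$-filtration of $\nabla(1,0)$) places a copy of $L(0,1)$ too deep in the radical series, conflicting with $L(0,1)$ being the simple head of $\nabla(2,1)$.

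The main obstacle, and where I would spend most of the effort, is the precise bookkeeping of composition factors and their positions. The clean contradiction rests on reconciling three pieces of data: the simple head $L(0,1)$, the second layer $L(1,0)^{(1)}$, and the internal filtration of the only admissible top factor $L(0,1)\otimes\nabla(1,0)^{(1)}$. I would need \cref{DS-Ext} once more to control how $k$ and $L(0,1)$ and $L(1,0)^{(1)}$ can extend one another over $G$, ruling out the rearrangements that might otherwise save the filtration. If a single top factor does not immediately contradict the layer data, the fallback is a dimension/character count: subtract $\operatorname{ch}\nabla^{(2,1)}(2,1)$ from $\operatorname{ch}\nabla(2,1)$ and check that the residual character cannot be written as a nonnegative combination of the characters $\operatorname{ch}\nabla^{(2,1)}(\mu)$ for the finitely many $\mu<(2,1)$ that are $\le$ the relevant weights, which would complete the argument purely numerically.
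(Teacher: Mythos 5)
Your proposal breaks down at its very first step: the claim that the top factor of a putative good $2$-filtration must be the factor of maximal highest weight, namely $\nabla^{(2,1)}(2,1)=L(0,1)\otimes\nabla(1,0)^{(1)}$. Highest-weight (character) considerations only tell you that this factor must occur \emph{somewhere} in the filtration; nothing places it at the top, and the general principle you invoke is false --- for ordinary good filtrations the factor of maximal highest weight typically sits at the \emph{bottom} (e.g.\ $\nabla(\lambda)$ is a submodule, not a quotient, of $T(\lambda)$). What is actually forced at the top is a factor whose head is a quotient of the head of $\nabla(2,1)$, i.e.\ equal to $L(0,1)$, and at that stage both $L(0,1)$ and $L(0,1)\otimes\nabla(1,0)^{(1)}$ are candidates. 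Your radical-layer comparison, carried out correctly, does eliminate the second candidate: every radical layer of a quotient of $\nabla(2,1)$ is a quotient of the corresponding layer of $\nabla(2,1)$, and a top factor $L(0,1)\otimes\nabla(1,0)^{(1)}$ would force $L(2,1)=L(0,1)\otimes L(1,0)^{(1)}$ into the head or second layer, contradicting \cref{P:socle-radical}. But that is precisely the \emph{first} step of the paper's proof, whose conclusion is that the top factor is $L(0,1)$; it does not finish the argument. The main case --- top factor $L(0,1)$, with the $98$-dimensional factor buried lower in the filtration --- is untreated in your proposal, and that is where the real work lies. The paper handles it as follows: once $F_n/F_{n-1}\cong L(0,1)$, one gets $F_{n-1}=\operatorname{rad}_G\nabla(2,1)$, whose head is $L(1,0)^{(1)}$ by \cref{P:socle-radical}(c); hence $F_{n-1}/F_{n-2}$ must be of the form $\nabla(\mu)^{(1)}$ with $L(1,0)$ the head of $\nabla(\mu)$ (the head being trivial over $G_1$ kills any nontrivial tensor factor $L(\mu_0)$); finally the bound $2\langle\mu,\alpha_0^{\vee}\rangle\le\langle(2,1),\alpha_0^{\vee}\rangle=7$ leaves only $\mu\in\{(0,0),(1,0),(0,1)\}$, and none of these $\nabla(\mu)$ has head $L(1,0)$. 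None of this appears in your proposal.

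Two further problems. First, your proposed mechanism is backwards: $L(0,1)\otimes k\cong L(0,1)$ is a \emph{quotient} of $L(0,1)\otimes\nabla(1,0)^{(1)}$, not a submodule, because $k$ is the head of $\nabla(1,0)$ and $L(1,0)$ is its socle; the factor that would sit ``too deep'' is $L(2,1)$, not $L(0,1)$. Second, the numerical fallback cannot rescue the argument. Since $\St\otimes\nabla(2,1)$ is a tensor product of modules with good filtrations, it has a good filtration, and for $G_2$ the module $\St\otimes L(\mu_0)$ is tilting for every $\mu_0\in X_1$; by the character-level results accompanying these facts (cf.\ \cite{KN}), $\operatorname{ch}\nabla(2,1)$ \emph{does} admit a nonnegative expansion in the characters $\operatorname{ch}\nabla^{(2,1)}(\mu)$ (already at the level of dimensions, $189=98+64+14+7+6$). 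This is exactly what makes $\nabla(2,1)$ a genuine counterexample to \cref{donkinconj}($\Leftarrow$): the obstruction is structural --- where the factors would have to sit relative to the radical series --- and is invisible to characters, which is why the paper's proof (and the computation cited in its footnote) works with the module structure rather than with any character or dimension count.
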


\begin{proof}
Suppose that
$$0 = F_0 \subseteq F_1 \subseteq \cdots \subseteq F_n = \nabla(2,1)$$
is a good $2$-filtration.  In view of the structure of the radical series of $\nabla(2,1)$,  
$$F_n/F_{n-1} \cong L(0,1) \quad \text{and} \quad F_{n-1}/F_{n-2} \cong \nabla(\mu)^{(1)},$$
with $L(1,0)$ being the $G$-head of $\nabla(\mu)$.  Since $2\mu \le (2,1)$ under the usual partial ordering of weights, we have
$$2\langle \mu, \alpha_0^{\vee} \rangle \le \langle (2,1), \alpha_0^{\vee} \rangle = 7,$$
where $\alpha_0$ denotes the maximal short root.
Therefore, 
$$\langle \mu, \alpha_0^{\vee} \rangle \le 3,$$
implying that $\mu \in \{ (0,0), (1,0), (0,1) \}$.  But $L(1,0)$ is not in the head of $\nabla(\mu)$ for any of these choices of $\mu$, therefore no such filtration on $\nabla(2,1)$ is possible.
\end{proof}

\begin{remark}
H.H. Andersen has pointed out to us that the module $\nabla(0,2)$ is uniserial, and that its top two layers are the same as those of $\nabla(2,1)$, so that this module also fails to have a good $2$-filtration.
\end{remark}

\subsection{}\label{S:moduleM} The lack of a good $2$-filtration leads to other interesting phenomena which will factor into our proof that the Tilting Module Conjecture does not hold.

\begin{prop}\label{P:nogood}
For the group $G_2$ with $p = 2$, the module $\St \otimes \textup{rad}_G \nabla(2,1)$ does not have a good filtration.
\end{prop}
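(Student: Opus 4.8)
The plan is to argue by contradiction, transporting the failure recorded in Theorem~\ref{T:no2good} into the statement about $\St\otimes\rad_{G}\nabla(2,1)$ through the short exact sequence defining the radical. Write $N=\rad_{G}\nabla(2,1)$ and apply the exact functor $\St\otimes(-)$ to $0\to N\to\nabla(2,1)\to L(0,1)\to 0$ to obtain
\begin{equation*}
0\longrightarrow \St\otimes N\longrightarrow \St\otimes\nabla(2,1)\longrightarrow \St\otimes L(0,1)\longrightarrow 0.
\end{equation*}
The two right-hand terms have good filtrations: $\St=\nabla(1,1)$ and $\nabla(2,1)$ both have good filtrations, so their tensor product does, while $\St\otimes L(0,1)$ is a tilting module by \cite{KN} (its $\nabla$-factors being computable from Proposition~\ref{St:tensor}(c) and Theorem~\ref{T:Q}(c)). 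Assume, for contradiction, that $\St\otimes N$ also has a good filtration.

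Next I would convert ``good filtration'' into a lifting statement. Using the criterion that a module $M$ has a good filtration precisely when $\Ext^1_G(\Delta(\mu),M)=0$ for all $\mu\in X_{+}$, together with $\St^*\cong\St$ and the resulting adjunction $\Ext^1_G(\Delta(\mu),\St\otimes N)\cong\Ext^1_G(\St\otimes\Delta(\mu),N)$, I feed the defining sequence of $N$ into $\Ext^\bullet_G(\St\otimes\Delta(\mu),-)$. Since $\St\otimes\Delta(\mu)$ has a Weyl filtration and $\nabla(2,1)$ has a good filtration, the term $\Ext^1_G(\St\otimes\Delta(\mu),\nabla(2,1))$ vanishes, so
\[
\Ext^1_G(\Delta(\mu),\St\otimes N)\;\cong\;\operatorname{coker}\Big(\Hom_G(\St\otimes\Delta(\mu),\nabla(2,1))\xrightarrow{\ \theta_\mu\ }\Hom_G(\St\otimes\Delta(\mu),L(0,1))\Big),
\]
where $\theta_\mu$ is induced by the head projection $\nabla(2,1)\twoheadrightarrow L(0,1)$. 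Thus the contradiction hypothesis forces every $\theta_\mu$ to be surjective; equivalently, every homomorphism $\St\otimes\Delta(\mu)\to L(0,1)$ lifts through $\nabla(2,1)$.

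The heart of the argument is to exhibit a single $\mu$ for which this lifting fails. Only finitely many weights are relevant, namely those with $(\St\otimes L(0,1):\nabla(\mu))\neq0$, which are read off from the good filtration of $\St\otimes L(0,1)$; among them $\mu=(1,0)$ is natural, since there both Hom-spaces are one-dimensional, so $\theta_{(1,0)}$ is either an isomorphism or zero. For a suitable such $\mu$ I would compute $\theta_\mu$ directly from the top of $\nabla(2,1)$: its head is $L(0,1)$ and, by Proposition~\ref{P:socle-radical}, its second radical layer is $L(1,0)^{(1)}$, so that $N$ has head $L(1,0)^{(1)}$. Feeding this together with the extension data of Proposition~\ref{DS-Ext} (in particular that $k$ does not extend $L(0,1)$, which pins down the relevant $G$-structure) into the analysis, one sees that the generator of $\Hom_G(\St\otimes\Delta(\mu),L(0,1))$ does not lift to $\nabla(2,1)$. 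Hence $\theta_\mu$ is not surjective, $\Ext^1_G(\Delta(\mu),\St\otimes N)\neq0$, and $\St\otimes N$ has no good filtration.

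The main obstacle is precisely this last non-surjectivity, and it is delicate because it cannot be detected numerically: a direct multiplicity count gives $(\St\otimes\nabla(2,1):\nabla(\mu))\ge(\St\otimes L(0,1):\nabla(\mu))$ for every $\mu$, so a good filtration of $\St\otimes N$ is \emph{not} excluded on dimension grounds, and the contradiction must be extracted from the module maps themselves rather than from characters. This is the concrete incarnation of the failure in Theorem~\ref{T:no2good}: the obstruction to lifting is exactly the obstruction to realizing $L(1,0)^{(1)}$ as the head of a $\nabla(\mu)^{(1)}$-factor, which is what prevents $\nabla(2,1)$ from having a good $2$-filtration.
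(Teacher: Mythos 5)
Your homological setup is sound: tensoring the radical sequence with $\St$ is exact, $\St\otimes\nabla(2,1)$ has a good filtration and $\St\otimes L(0,1)$ is tilting, $\Ext^1_G(\St\otimes\Delta(\mu),\nabla(2,1))=0$ by the standard Weyl/good orthogonality, and hence $\Ext^1_G(\Delta(\mu),\St\otimes\rad_G\nabla(2,1))\cong\operatorname{coker}\theta_\mu$. But the proof stops exactly where the proposition begins: the decisive claim that some $\theta_\mu$ is not surjective is never argued. The sentence ``feeding this \dots into the analysis, one sees that the generator of $\Hom_G(\St\otimes\Delta(\mu),L(0,1))$ does not lift'' asserts the conclusion; Proposition~\ref{DS-Ext} and Proposition~\ref{P:socle-radical} by themselves do not produce it, and you supply no computation. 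Moreover your candidate $\mu=(1,0)$ is almost certainly the wrong detecting weight: the obstruction the paper locates lives in the $G_1$-Steinberg block of $\St\otimes\rad_G\nabla(2,1)$, whose composition factors have the form $\St\otimes L(\nu)^{(1)}$, and in your framework it is detected at $\mu=(1,1)$, where $\Hom_G(\St,\St\otimes L(0,1))$ is two-dimensional (the two zero weights of $L(0,1)$) while only one copy of $\St$ lifts to a quotient of $\St\otimes\nabla(2,1)$ --- precisely the nonsplit extension of $\St\otimes L(1,0)^{(1)}$ by $\St$ that the paper establishes in Section~\ref{S:moduleM}, \emph{after} and \emph{using} Proposition~\ref{P:nogood}. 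You verify neither Hom-dimension for $(1,0)$, and even your unverified count (``both one-dimensional'') is consistent with $\theta_{(1,0)}$ being an isomorphism, in which case that weight detects nothing.

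The paper's proof is shorter and avoids Ext groups altogether; its engine is an ingredient absent from your proposal: any simple quotient of a module with a good filtration must be the head of one of its $\nabla$-factors. Restricting to the $G_1$-Steinberg block component, the weight bound from the proof of Theorem~\ref{T:no2good} forces any $\nabla$-factor there to be $\St\otimes\nabla(\mu)^{(1)}$ with $\mu\in\{(0,0),(1,0),(0,1)\}$; since $L(1,0)^{(1)}$ is the head of $\rad_G\nabla(2,1)$ by Proposition~\ref{P:socle-radical}(c), the factor $\St\otimes L(1,0)^{(1)}$ lies in the head of that block component, so some $\nabla(\mu)$ with $\mu$ in this three-element set would need head $L(1,0)$ --- impossible, as the heads are $k$, $k$, and $L(0,1)$. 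To complete your route you would have to prove the non-lifting at $\mu=(1,1)$, and any such proof effectively reruns this head-and-weight analysis; as written, the proposal has a genuine gap at its central step.
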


\begin{proof}
It suffices to show that the Steinberg block component of this module does not admit a good filtration.  Any composition factor of $\St \otimes \textup{rad}_G \nabla(2,1)$ that lies within the Steinberg block has the form $\St \otimes L(\mu)^{(1)}$.  Further, for any such composition factor, we have
$2\mu \le (2,1)$, and as in the previous proof one has $\mu \in \{(0,0), (1,0), (0,1)\}$.  Since $L(1,0)^{(1)}$ is the head of $\textup{rad}_G \nabla(2,1)$, $\St \otimes L(1,0)^{(1)}$ must appear in the head of (the Steinberg block of) $\St \otimes \textup{rad}_G \nabla(2,1)$. 
But we again reason as in the proof above.  If the Steinberg block of $\St \otimes \textup{rad}_G \nabla(2,1)$ has a good filtration, then there is some $\nabla(\mu)$ such that $L(1,0)$ is the head of $\nabla(\mu)$ and $\St \otimes \nabla(\mu)^{(1)}$ is a subquotient of $\St  \otimes \textup{rad}_G \nabla(2,1)$.  But no such subquotient is possible with the limitations on $\mu$.
\end{proof}

\subsection{Conjecture~\ref{donkinconj}($\Leftarrow$): Minimal Counterexample} The module $\St \otimes \nabla(2,1)$ has a good filtration, and none of its $\nabla$-quotients map onto $L(3,1) \cong \St\otimes L(1,0)^{(1)}$.  
It was observed earlier that two copies of $\St$ are contained in $\St \otimes L(0,1)$.  Therefore, it follows that one of these copies nontrivially extends the composition factor $\St \otimes L(1,0)^{(1)}$ in $\St \otimes \textup{rad}_G \nabla(2,1)$ that comes from
$$\St \otimes [\rad_G \nabla(2,1)/\rad_G^2\nabla(2,1)].$$
Now define the $G$-module $M$ via the short exact sequence
\begin{equation}\label{M}
0 \to \rad_G^2T(2,1) \to T(2,1) \to M \to 0.
\end{equation}
Then the non-split  sequences
$$0 \to \rad_G^2\nabla(2,1) \to \nabla(2,1) \to M \to 0$$
and
$$0 \to L(1,0)^{(1)} \to M \to L(0,1) \to 0$$
are immediate consequences of Proposition~\ref{P:socle-radical}.

From weight considerations and Theorem~\ref{T:Q}, it follows that $\St \otimes M \cong T(1,2) \oplus S$, where $S$ is the summand containing all composition factors in the $G_1$-Steinberg block of $\St \otimes M$. We know that $S$ contains $\St \otimes L(1,0)^{(1)}$ once as a composition factor and the Steinberg module twice. No other composition factors occur, and as a consequence of previous discussion, one of the Steinberg factors must sit on top of $\St \otimes L(1,0)^{(1)}$.  In conclusion,
\begin{align*}
\St \otimes M & \cong T(1,2) \oplus (\St \otimes \nabla(1,0)^{(1)}) \oplus \St \\
&  \cong T(1,2) \oplus \nabla(3,1) \oplus \St,
\end{align*}
which has a good filtration.  This then proves the following:

\begin{prop}\label{M:good} Let $M$ be the module defined in \eqref{M}.
\begin{itemize}
\item[(a)] $\St \otimes M$ has a good filtration.
\item[(b)] $\Hom_G( \St, \St \otimes M) =k.$
\end{itemize}
\end{prop}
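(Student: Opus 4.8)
The plan is to derive both assertions directly from the decomposition $\St\otimes M\cong T(1,2)\oplus\nabla(3,1)\oplus\St$ obtained immediately above the statement. For part (a) I would simply observe that each of the three summands carries a good filtration: $T(1,2)$ is an indecomposable tilting module, while $\nabla(3,1)$ and $\St=\nabla((p-1)\rho)$ are single costandard modules, each being its own one-step good filtration. Since a finite direct sum of modules with good filtrations again admits one, part (a) is immediate.

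For part (b) I would compute $\Hom_G(\St,-)$ on each summand, using that $\St=L(1,1)$ is simple, so that $\Hom_G(\St,N)$ measures the multiplicity of $\St$ in $\soc_G N$. Clearly $\Hom_G(\St,\St)=k$. The socle of $\nabla(3,1)$ is $L(3,1)\neq\St$, so $\Hom_G(\St,\nabla(3,1))=0$. The remaining term $\Hom_G(\St,T(1,2))$ is the crux: by Theorem~\ref{T:Q}(c) one has $T(1,2)\cong Q_1(1,0)$ as a $G_1$-module, and all $G_1$-composition factors of $Q_1(1,0)$ lie in the $G_1$-block of $L(1,0)$. Since $\St$ is projective and simple over $G_1$, it constitutes its own $G_1$-block and hence is not a $G_1$-composition factor of $T(1,2)$; as any $G$-embedding $\St\hookrightarrow T(1,2)$ would restrict to a $G_1$-embedding, this forces $\Hom_G(\St,T(1,2))=0$. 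Summing the three contributions gives $\Hom_G(\St,\St\otimes M)=k$.

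An equivalent route to (b) uses part (a) together with the identity $\St=\Delta(1,1)$: for any module $N$ with a good filtration one has $\dim\Hom_G(\Delta(\lambda),N)=(N:\nabla(\lambda))$, so $\dim\Hom_G(\St,\St\otimes M)=(\St\otimes M:\nabla(1,1))$, and among the three summands only $\St=\nabla(1,1)$ contributes, the $T(1,2)$ term being excluded by the same block argument. Either way, the only genuine obstacle is verifying that $\St$ does not occur inside $T(1,2)$; everything else is formal, and this step is handled cleanly by the $G_1$-block structure coming from $T(1,2)\cong Q_1(1,0)$.
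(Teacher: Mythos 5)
Your proposal is correct and takes essentially the same approach as the paper: the paper's proof of this proposition is precisely the derivation, in the paragraph preceding the statement, of the decomposition $\St \otimes M \cong T(1,2) \oplus \nabla(3,1) \oplus \St$, from which both parts are declared to follow. Your write-up merely makes the final deductions explicit --- in particular the $G_1$-block argument, via Theorem~\ref{T:Q}(c), that $\Hom_G(\St, T(1,2)) = 0$ --- details the paper leaves to the reader.
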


The module $M$ has composition factors $L(0,1)$ and $L(1,0)^{(1)}$.  Since $L(1,0)^{(1)} \not\cong \nabla(1,0)^{(1)}$, we see that $M$ does not have a good $2$-filtration, even though $\St \otimes M$ has a good filtration.  One could then consider $M$ as a minimal counterexample to Conjecture~\ref{donkinconj}($\Leftarrow$), as it has only two composition factors.  

Indeed, in the general context of a semisimple $G$ and arbitrary prime $p$,  a counterexample with only one composition factor is not possible.  For example, if for some $\la=\la_0+p\la_1$, with $\la_0 \in X_1$ and $\la_1 \in X_+$, the module
$$\St \otimes L(\la_0) \otimes L(\la_1)^{(1)}$$
has a good filtration, then it must be tilting.  But then
$$\St \otimes L(\la_0) \otimes T((p-1)\rho-\la_0) \otimes L(\la_1)^{(1)}$$
is tilting, and since $\St$ is a summand of $L(\la_0) \otimes T((p-1)\rho-\la_0)$, we have that $\St \otimes \St \otimes L(\la_1)^{(1)}$ is also tilting, and then that $\St^{\otimes 3} \otimes L(\la_1)^{(1)}$ is tilting.  But $\St$ is a summand of $\St^{\otimes 3}$, so that $\St \otimes L(\la_1)^{(1)}$ is tilting, and we conclude that $L(\la_1) \cong \nabla(\la_1) \cong T(\la_1)$.  Consequently,  $L(\la_0) \otimes L(\la_1)^{(1)}$ is a good $p$-filtration module.

\section{On The Tilting Module Conjecture}

\subsection{} We return to the assumption that $G$ has a root system of type $G_2$ and the prime $p = 2$. The fact that $\St \otimes \rad_G \nabla(2,1)$ does not have a good filtration guarantees that the Tilting Module Conjecture does not hold in this case.  This essentially follows from \cite[Theorem 5.1.1]{So}, but here we will give a simple self-contained proof of this fact using the results already established in this paper.

\begin{theorem}\label{tilt:no}
The Tilting Module Conjecture does not hold for $G_2$ and $p=2$.
\end{theorem}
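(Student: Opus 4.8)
The plan is to reduce the conjecture, for $G_2$ with $p=2$, to a single statement about $T(2,2)$, and then to contradict that statement using the good-filtration obstruction already in hand. For the reduction, observe that $w_0=-1$ for $G_2$, so for $\lambda\in X_1$ the relevant weight is $\hat\lambda=2(p-1)\rho+w_0\lambda=(2,2)-\lambda$. Theorem~\ref{T:Q} therefore already verifies the conjecture for $\lambda\in\{(1,0),(0,1),(1,1)\}$, and the sole remaining case is $\lambda=(0,0)$, where $\hat\lambda=2\rho=(2,2)$. Since $T(2,2)$ is a $G$-summand of the $G_1$-projective module $\St\otimes\St$, it is projective over $G_1$, and by highest-weight considerations $Q_1(0,0)$ occurs in $T(2,2)|_{G_1}$ with multiplicity one. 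Hence the Tilting Module Conjecture holds in this case if and only if $T(2,2)$ is indecomposable over $G_1$, and my goal is to show that it is \emph{decomposable}.

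Conceptually, the decomposability is forced by Proposition~\ref{P:nogood}: this is the content of \cite[Theorem 5.1.1]{So}, which ties the $G_1$-indecomposability of $T(2,2)$ to $\St\otimes\rad_G\nabla(2,1)$ admitting a good filtration. To keep the argument self-contained I would instead locate a second indecomposable summand of $T(2,2)|_{G_1}$ directly in the socle. Using the socle computation from the proof of Proposition~\ref{St:tensor}(d), one has, as a $G$-module, \[\soc_{G_1}(\St\otimes\St)\cong k\oplus L(0,1)^{\oplus 2}\oplus(\St\otimes T(1,0)^{(1)})^{\oplus 2}.\] As $T(2,2)$ is a $G$-summand of $\St\otimes\St$, its $G_1$-socle is a $G$-summand of the right-hand side; under the (to-be-refuted) assumption $T(2,2)|_{G_1}=Q_1(0,0)$ this socle would be exactly the trivial summand $k$.

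I would then show that the two copies of $L(0,1)$ cannot both be absorbed by the complementary tilting summands of $\St\otimes\St$. A summand $T(\mu)$ contains $Q_1(0,1)$ as a $G_1$-summand only if $\mu\ge\hat{(0,1)}=(2,1)$, hence (among $\mu<(2,2)$) only for $\mu=(2,1)$; so the two copies of $Q_1(0,1)$ in $\St\otimes\St|_{G_1}$ (Proposition~\ref{St:tensor}(d)) are distributed between the $T(2,1)$-summands and $T(2,2)$. If both copies of $L(0,1)$ belonged to the socle of the $T(2,1)$-part, then one could reassemble a good filtration of $\St\otimes\rad_G\nabla(2,1)$; this is precisely the Steinberg-block phenomenon isolated in Section~\ref{S:moduleM}, where a copy of $\St$ sits nontrivially on top of $\St\otimes L(1,0)^{(1)}$. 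I would rule this out using the second socle layer of $T(2,1)$ from Proposition~\ref{P:socle-radical} together with the $\Ext^1$-data of Proposition~\ref{DS-Ext}, concluding that at least one copy of $L(0,1)$—equivalently $Q_1(0,1)$—lies in $\soc_{G_1}T(2,2)$, so that $T(2,2)$ is $G_1$-decomposable.

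I expect the main obstacle to be exactly this last multiplicity bookkeeping: proving that the second copy of $Q_1(0,1)$ genuinely lands in $T(2,2)$ rather than in a second copy of $T(2,1)$. Equivalently, and perhaps more cleanly, I would run the contrapositive: tensoring $0\to\rad_G\nabla(2,1)\to\nabla(2,1)\to L(0,1)\to 0$ with $\St$ gives \[0\to\St\otimes\rad_G\nabla(2,1)\to\St\otimes\nabla(2,1)\to\St\otimes L(0,1)\to 0,\] where both outer terms have good filtrations; since $\Ext^1_G(\Delta(\mu),\St\otimes\nabla(2,1))=0$ for all $\mu$, the assumption $T(2,2)|_{G_1}=Q_1(0,0)$ would force the map on $\Hom_G(\Delta(\mu),-)$ to be surjective for every $\mu$, producing a good filtration of $\St\otimes\rad_G\nabla(2,1)$ and contradicting Proposition~\ref{P:nogood}. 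Either way the conjecture fails for $G_2$ and $p=2$.
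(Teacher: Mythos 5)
Your framing is right—reduce to $\lambda=(0,0)$ via Theorem~\ref{T:Q} (this matches the paper, which likewise takes the content of the conjecture here to be $T(2,2)|_{G_1}\cong Q_1(0,0)$), and contradict that assumption using the failure of a good filtration on $\St\otimes\rad_G\nabla(2,1)$—but both of your self-contained routes assert, rather than prove, the one implication that carries all the weight. In route A, ``then one could reassemble a good filtration of $\St\otimes\rad_G\nabla(2,1)$'' is given no mechanism; in route B, the claim that ``the assumption $T(2,2)|_{G_1}=Q_1(0,0)$ would force the map on $\Hom_G(\Delta(\mu),-)$ to be surjective for every $\mu$'' is exactly the nontrivial point, and nothing in your argument connects $G_1$-indecomposability of $T(2,2)$ to lifting both copies of $\St$ inside $\St\otimes L(0,1)$ through $\St\otimes\nabla(2,1)$. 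That implication is essentially \cite[Theorem 5.1.1]{So}, which you invoke only ``conceptually''; reproving it is the missing step, not bookkeeping. The paper closes precisely this gap with the module $M$ of \eqref{M} and Proposition~\ref{M:good}: assuming the conjecture, the $G_1$-socle computation from Proposition~\ref{St:tensor}(d) together with Theorem~\ref{T:Q} forces the $G$-decomposition $\St\otimes\St\cong T(2,2)\oplus T(2,1)^{\oplus 2}\oplus T(3,1)^{\oplus 2}$, so that, $M$ being a quotient of $T(2,1)$,
$$2\le \dim\Hom_G(\St\otimes\St, M)=\dim\Hom_G(\St,\St\otimes M),$$
contradicting Proposition~\ref{M:good}(b) (whose proof is where Proposition~\ref{P:nogood} actually enters). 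You never use Proposition~\ref{M:good}, so your proposal contains no quantitative statement against which the assumption can collide; supplying one would amount to redoing the paper's Section~\ref{S:moduleM}.

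There is also a concrete error in route A's multiplicity argument: you claim that among $\mu<(2,2)$ only $\mu=(2,1)$ permits $Q_1(0,1)$ as a $G_1$-summand of $T(\mu)$. The necessary condition $\mu\ge(2,1)$ also admits $\mu=(3,1)$ and $\mu=(4,0)$, since in type $G_2$ one has $(3,1)-(2,1)=2\alpha_1+\alpha_2$, $(4,0)-(2,1)=\alpha_1$, and both weights lie below $(2,2)$ (e.g.\ $(2,2)-(3,1)=\alpha_1+\alpha_2$ and $(2,2)-(4,0)=2\alpha_1+2\alpha_2$). This matters because $T(3,1)$ really does occur in $\St\otimes\St$ (twice, in the decomposition above); it lies in the $G_1$-Steinberg block and restricts to a direct sum of copies of $\St$, hence carries no $Q_1(0,1)$—but that requires an argument, and your stated criterion is false as written. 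Your second-person fallback, citing \cite[Theorem 5.1.1]{So} outright together with Proposition~\ref{P:nogood}, would be a legitimate (non-self-contained) proof, and the paper acknowledges as much before choosing the elementary route through $M$.
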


\begin{proof}Assume that the Tilting Module Conjecture holds, so that $T(2,2)|_{G_1} \cong Q_1(0,0)$. From the $G$-module structure of the $G_1$-socle of $\St \otimes \St$, as  observed in the proof of Proposition~\ref{St:tensor} part (d), and Theorem~\ref{T:Q}, one then concludes that (as $G$-modules) 
\begin{equation}\label{E:StSt}
\St\otimes\St \cong T(2,2)\oplus T(2,1)^{\oplus 2}\oplus T(3,1)^{\oplus 2}.
\end{equation}
In particular, the tilting module $T(2,1)$ appears twice in the tensor product $\St \otimes \St$.  Let $M$ be the quotient of $T(2,1)$ from Proposition~\ref{M:good}. Then we have that 
$$2 \leq \dim \Hom_G(\St \otimes \St , M) = \dim \Hom_G(\St, M \otimes \St ),$$
a contradiction to part (b) of Proposition~\ref{M:good}.
\end{proof}

\subsection{The socle of $T(2,2)$} There are two copies of $L(0,1)$ in the $G$-socle of $\St \otimes \St$, but we have now established that $T(2,1)$ occurs as a summand of $\St \otimes \St$ at most once  (i.e., the decomposition in \eqref{E:StSt} {\it fails} to hold).  Looking again at Theorem~\ref{T:Q}, it follows that $L(0,1)$ must appear as a submodule of $T(2,2)$.
This fact has been independently confirmed by Doty's program \cite{Doty, GAP}, which has computed more precisely that
$$k \oplus L(0,1) \cong \soc_G \Delta(2,2) \subseteq T(2,2).$$
We note that, whenever $T(\hat{\lambda})=Q_{1}(\lambda)$ as a $G_{1}$-module for $\lambda\in X_{1}$, then $\soc_G \Delta(\hat{\lambda})$ must  be simple and isomorphic to $L(\lambda)$.

\subsection{The Humphreys-Verma Conjecture} Although $T(2,2)$ is not a lift of $Q_1(0,0)$, it is still possible that $Q_1(0,0)$ has some other $G$-module structure, so the Humphreys-Verma Conjecture remains open for now.  Nevertheless, it is significant that even if there is some $G$-structure, it will not occur as a $G$-submodule of $\St \otimes \St$ (though it could appear as a subquotient).  This defies the long held expectation, going back to early work by Humphreys and Verma, that a $G$-structure should occur in precisely this way.

\providecommand{\bysame}{\leavevmode\hbox
to3em{\hrulefill}\thinspace}

\end{document}